\newtheorem{definition}{Definition}[section]
\newtheorem{proposition}[definition]{Proposition}
\newtheorem{remark}[definition]{Remark}
\newtheorem{theorem}[definition]{Theorem}
\newtheorem{example}[definition]{Example}
\newcommand{\oot}{\overline{\otimes}}
\def\rawo\lonra{\longrightarrow}
\def\ot{\otimes}
\newcommand{\selabel}[1]{\label{se:#1}}
\newenvironment{proof}{{\it Proof.}}{\hfill $ \square $ \vskip 4mm}
\begin{document}
\title{Iterated crossed products
\thanks{This work was supported by a grant of the Romanian National 
Authority for Scientific Research, CNCS-UEFISCDI, 
project number PN-II-ID-PCE-2011-3-0635,  
contract nr. 253/5.10.2011.}}
\author {Florin Panaite\\
Institute of Mathematics of the 
Romanian Academy\\ 
PO-Box 1-764, RO-014700 Bucharest, Romania\\
e-mail: Florin.Panaite@imar.ro
}
\date{}
\maketitle
\begin{center}
Dedicated to Professor Constantin N\u{a}st\u{a}sescu on the 
occasion of his $70^{th}$ birthday
\end{center}

\begin{abstract}
We define a ''mirror version'' of Brzezi\'{n}ski's crossed product and we prove that, 
under certain circumstances, a Brzezi\'{n}ski crossed product $D\otimes _{R, \sigma }V$ 
and a mirror version $W\overline{\otimes }_{P, \nu }D$ may be iterated, obtaining an 
algebra structure on $W\otimes D\otimes V$. Particular cases of this construction are the 
iterated twisted tensor product of algebras and the quasi-Hopf two-sided smash product. 
\end{abstract}
%%%%%%%%%%%%%%%%%%%%%%%%%%%%%%
\section*{Introduction}
%%%%%%%%%%%%%%%%%%%%%%%%%%%%%%
${\;\;\;\;}$
If $A$ and $B$ are associative unital algebras and $R:B\otimes A\rightarrow 
A\otimes B$ is a linear map satisfying certain axioms (such an $R$ is called a twisting map) 
then $A\otimes B$ becomes an associative unital algebra with a multiplication defined 
in terms of $R$ and the multiplications of $A$ and $B$; this algebra structure is denoted 
by $A\otimes _RB$ and called the twisted tensor product of $A$ and $B$ afforded by $R$ 
(cf. \cite{Cap}, \cite{VanDaele}).  There exist many concrete examples of twisted tensor 
products, for instance the Hopf smash product and other kinds of products arising 
in Hopf algebra theory. 

In \cite{jlpvo}, two types of general results have been proved for twisted tensor products. 
One was called ''invariance under twisting'' (since it was directly inspired by the invariance 
under twisting of the Hopf smash product). The other one is the fact that twisted 
tensor products may be iterated. More precisely, given three twisted tensor products 
$A\otimes _{R_1}B$, $B\otimes _{R_2}C$ and $A\otimes _{R_3}C$, it has been proved that 
a sufficient condition for being able to define certain twisting maps 
$T_1:C\otimes (A\otimes _{R_1}B)\rightarrow (A\otimes _{R_1}B)\otimes C$  and 
$T_2:(B\otimes _{R_2}C)\otimes A\rightarrow A\otimes (B\otimes _{R_2}C)$  associated 
to $R_1$, $R_2$, $R_3$ and ensuring that the algebras $A\otimes _{T_2}(B\otimes _{R_2}C)$ 
and $(A\otimes _{R_1}B)\otimes _{T_1}C$ are equal (this algebra is called the iterated 
twisted tensor product), can be given in terms of the maps $R_1$, $R_2$, $R_3$, 
namely, they have to satisfy the braid relation $(id_A\otimes R_2)\circ 
(R_3\otimes id_B)\circ (id_C\otimes R_1)=(R_1\otimes id_C)\circ (id_B\otimes R_3)\circ 
(R_2\otimes id_A)$. 

On the other hand, there exist important examples of products of algebras that are not 
twisted tensor products, a prominent example being  the classical Hopf crossed 
product. In \cite{brz}, Brzezi\'{n}ski introduced a very general construction, 
called crossed product, containing as particular cases  twisted tensor products of algebras 
as well as classical Hopf crossed products. 
Given an associative unital algebra $A$, a vector space $V$ endowed with a  
distinguished element $1_V$  and two linear maps $\sigma :V\otimes V
\rightarrow A\otimes V$ and $R:V\otimes A\rightarrow A\otimes V$ 
satisfying certain conditions, Brzezi\'{n}ski's crossed product is a certain 
associative unital algebra structure on $A\otimes V$, denoted in what follows by 
$A\otimes _{R, \sigma }V$.  Another example of a crossed product is the smash product 
$H\# B$ between a quasi-bialgebra $H$ and a right $H$-module algebra $B$, cf. \cite{bpvo} 
(since in general $B$ is not associative, $H\# B$ in general is not a twisted tensor product 
of algebras).  

In \cite{panaite} we have proved a result of the type invariance under twisting 
for crossed products, that arose as a common generalization of the invariance under twisting 
for twisted tensor products of algebras and the invariance under twisting for quasi-Hopf 
smash products.   

In this paper we will study iterated crossed products. Our motivating example was the so-called 
quasi-Hopf two-sided smash product $A\# H\# B$ from \cite{bpvo}, where $H$ is a 
quasi-bialgebra and $A$ (respectively $B$) is a left (respectively right) $H$-module 
algebra. We wanted to express $A\# H\# B$ as some sort of iterated product between 
the two smash products $A\# H$ and $H\# B$. As we have seen before, 
$H\# B$ is a Brzezi\'{n}ski crossed product. We needed to express also $A\# H$ as some 
sort of crossed product. It turns out that there exists a ''mirror version'' of 
Brzezi\'{n}ski's crossed product, which we denote by $W\overline{\otimes}_{P, \nu }D$ 
(where $D$ is an associative algebra, $W$ is a vector space and $P$, $\nu $ are certain 
maps), and $A\# H$ is an example of such a crossed product.  Our result, that contains as 
particular cases both the above $A\# H\# B$ and the result about iterated twisted tensor 
products of algebras, may be formulated as follows: if $W\overline{\otimes}_{P, \nu }D$ 
and $D\otimes _{R, \sigma }V$ are two crossed products and $Q:V\otimes W\rightarrow 
W\otimes D\otimes V$ is a linear map satisfying certain conditions, then one can define 
certain maps $\overline{\sigma }$, $\overline{R}$, $\overline{\nu }$, $\overline{P}$ such 
that we have the crossed products $(W\overline{\otimes}_{P, \nu }D)\otimes _{\overline{R}, 
\overline{\sigma }}V$ and $W\overline{\otimes }_{\overline{P}, \overline{\nu }}
(D\otimes _{R, \sigma }V)$ that are moreover equal as algebras (this algebra structure is called 
the iterated crossed product). Moreover, this result admits a certain converse. Finally, we 
prove that a certain construction introduced  in \cite{sommer} is an 
example of an iterated crossed product. 
%%%%%%%%%%%%%%%%%%%%%%%%%%%%%%%
\section{Preliminaries}\selabel{1}
%%%%%%%%%%%%%%%%%%%%%%%%%%%%%%%
${\;\;\;\;}$
We work over a commutative field $k$. All algebras, linear spaces
etc. will be over $k$; unadorned $\ot $ means $\ot_k$. By ''algebra'' we 
always mean an associative unital algebra. The multiplication 
of an algebra $A$ is denoted by $\mu _A$ or simply $\mu $ when 
there is no danger of confusion, and we usually denote 
$\mu _A(a\ot a')=aa'$ for all $a, a'\in A$. 

We recall from \cite{Cap}, \cite{VanDaele} that, given two algebras $A$, $B$ 
and a $k$-linear map $R:B\ot A\rightarrow A\ot B$, with notation 
$R(b\ot a)=a_R\ot b_R$, for $a\in A$, $b\in B$, satisfying the conditions 
$a_R\otimes 1_R=a\otimes 1$, $1_R\otimes b_R=1\otimes b$, 
$(aa')_R\otimes b_R=a_Ra'_r\otimes b_{R_r}$, 
$a_R\otimes (bb')_R=a_{R_r}\otimes b_rb'_R$, 
for all $a, a'\in A$ and $b, b'\in B$ (where $r$ and $R$ are two different indices), 
if we define on $A\ot B$ a new multiplication, by 
$(a\ot b)(a'\ot b')=aa'_R\ot b_Rb'$, then this multiplication is associative 
with unit $1\ot 1$. In this case, the map $R$ is called 
a {\bf twisting map} between $A$ and $B$ and the new algebra 
structure on $A\ot B$ is denoted by $A\ot _RB$ and called the 
{\bf twisted tensor product} of $A$ and $B$ afforded by the map $R$. 

We recall from \cite{brz} the construction of  
Brzezi\'{n}ski's crossed product:
\begin{proposition} (\cite{brz}) \label{defbrz}
Let $(A, \mu , 1_A)$ be an (associative unital) algebra and $V$ a 
vector space equipped with a distinguished element $1_V\in V$. Then 
the vector space $A\ot V$ is an associative algebra with unit $1_A\ot 1_V$ 
and whose multiplication has the property that $(a\ot 1_V)(b\ot v)=
ab\ot v$, for all $a, b\in A$ and $v\in V$, if and only if there exist 
linear maps $\sigma :V\ot V\rightarrow A\ot V$ and 
$R:V\ot A\rightarrow A\ot V$  satisfying the following conditions:
\begin{eqnarray}
&&R(1_V\ot a)=a\ot 1_V, \;\;\;R(v\ot 1_A)=1_A\ot v, \;\;\;\forall 
\;a\in A, \;v\in V, \label{brz1} \\
&&\sigma (1_V, v)=\sigma (v, 1_V)=1_A\ot v, \;\;\;\forall 
\;v\in V, \label{brz2} \\
&&R\circ (id_V\ot \mu )=(\mu \ot id_V)\circ (id_A\ot R)\circ (R\ot id_A), 
\label{brz3} \\
&&(\mu \ot id_V)\circ (id_A\ot \sigma )\circ (R\ot id_V)\circ 
(id_V\ot \sigma ) \nonumber \\
&&\;\;\;\;\;\;\;\;\;\;
=(\mu \ot id_V)\circ (id_A\ot \sigma )\circ (\sigma \ot id_V), \label{brz4} \\
&&(\mu \ot id_V)\circ (id_A\ot \sigma )\circ (R\ot id_V)\circ 
(id_V\ot R ) \nonumber \\
&&\;\;\;\;\;\;\;\;\;\;
=(\mu \ot id_V)\circ (id_A\ot R )\circ (\sigma \ot id_A). \label{brz5} 
\end{eqnarray}
If this is the case, the multiplication of $A\ot V$ is given explicitly by
\begin{eqnarray*} 
&&\mu _{A\ot V}=(\mu _2\ot id_V)\circ (id_A\ot id_A\ot \sigma )\circ 
(id_A\ot R\ot id_V),
\end{eqnarray*}
where $\mu _2=\mu \circ (id_A\ot \mu )=\mu \circ (\mu \ot id_A)$. 
We denote by $A\ot _{R, \sigma }V$ this algebra structure and 
call it the {\bf crossed product} afforded by the data $(A, V, R, \sigma )$.  
\end{proposition}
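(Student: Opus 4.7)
The plan is to argue both directions by first extracting candidates for $R$ and $\sigma$ from the ambient multiplication, and then reducing all conditions to associativity/unitality checks evaluated on elementary tensors of the form $1_A\ot v$, $a\ot 1_V$.

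For the \emph{only if} direction, assume an associative unital multiplication on $A\ot V$ with unit $1_A\ot 1_V$ satisfying $(a\ot 1_V)(b\ot v)=ab\ot v$. I would define
\begin{eqnarray*}
R(v\ot a):=(1_A\ot v)(a\ot 1_V),\qquad \sigma (v\ot w):=(1_A\ot v)(1_A\ot w),
\end{eqnarray*}
and first verify (\ref{brz1}) and (\ref{brz2}) by plugging in $1_V$, resp.\ $1_A$ and using the unit axiom. Next I would establish the explicit formula for $\mu _{A\ot V}$ by decomposing
\begin{eqnarray*}
(a\ot v)(b\ot w)=(a\ot 1_V)\bigl[(1_A\ot v)(b\ot 1_V)\bigr](1_A\ot w),
\end{eqnarray*}
applying associativity, and rewriting the middle bracket as $R(v\ot b)$ and the right pair via $\sigma$. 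Using the notation $R(v\ot a)=a_R\ot v_R$ and $\sigma (v\ot w)=v\cdot _\sigma w^{(1)}\ot w^{(2)}$, this becomes exactly the stated formula.

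For the \emph{if} direction I would define the multiplication by the explicit formula and check unitality from (\ref{brz1})--(\ref{brz2}). The content is associativity. I would verify $((a\ot v)(b\ot w))(c\ot u)=(a\ot v)((b\ot w)(c\ot u))$ by expanding both sides using $R$ and $\sigma$; since $a,b,c$ appear only through $\mu _A$, I can reduce by associativity of $\mu _A$ to proving associativity on three generic elements of the form $1_A\ot v$, $1_A\ot w$, $1_A\ot u$ (purely $\sigma$-information, giving (\ref{brz4})), on $1_A\ot v$, $1_A\ot w$, $c\ot 1_V$ (mixing $\sigma$ and $R$, giving (\ref{brz5})), and on $1_A\ot v$, $b\ot 1_V$, $c\ot 1_V$ (pure $R$-information, giving (\ref{brz3})); the remaining cases involve only $A$ on at least two slots and follow from $(a\ot 1_V)(b\ot v)=ab\ot v$ together with associativity of $A$.

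The main obstacle is the bookkeeping in these three associativity checks: writing both sides as maps out of $V\ot A\ot V$, $V\ot V\ot A$, and $V\ot V\ot V$ respectively, and matching them literally against (\ref{brz3}), (\ref{brz5}), (\ref{brz4}). I would do this in Sweedler-like index notation, tracking different copies of $R$ and $\sigma$ by distinct indices (e.g.\ $R,r$), which makes the correspondence with the axioms transparent and avoids cumbersome composition diagrams. Once these three identities are translated back into the coordinate-free form given in the statement, the proof closes.
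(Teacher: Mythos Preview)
The paper does not actually prove this proposition: it is quoted verbatim from Brzezi\'{n}ski's paper \cite{brz} as background, with no argument given. So there is nothing in the paper to compare your proposal against beyond noting that your outline is the standard route (and essentially the one in \cite{brz}): extract $R$ and $\sigma$ from $(1_A\ot v)(a\ot 1_V)$ and $(1_A\ot v)(1_A\ot w)$, recover the explicit multiplication by the factorisation $a\ot v=(a\ot 1_V)(1_A\ot v)$, and identify (\ref{brz3})--(\ref{brz5}) with associativity on elementary triples.

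One point in your sketch deserves tightening. In the ``if'' direction you write that ``$a,b,c$ appear only through $\mu_A$'' and use this to reduce general associativity to the three elementary triples. That is not literally true: $b$ and $c$ are fed into $R$, not just into $\mu_A$. What is true, and makes the reduction work, is that left multiplication $L_a(b\ot w)=ab\ot w$ satisfies $(L_a x)\cdot y=L_a(x\cdot y)$ directly from the explicit formula, so one may set $a=1_A$; then (\ref{brz3}) yields the ``type $(V,A,\,\cdot\,)$'' associativity $(1_A\ot v)[(b\ot 1_V)\,z]=[(1_A\ot v)(b\ot 1_V)]\,z$ for arbitrary $z$, which lets you peel off $b$ and then $c$ in turn and land on the two remaining triples governed by (\ref{brz4}) and (\ref{brz5}). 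Alternatively, a single direct expansion of both sides of $((a\ot v)(b\ot w))(c\ot u)=(a\ot v)((b\ot w)(c\ot u))$ using (\ref{brz3'})--(\ref{brz5'}) does the job without any reduction. Either way your plan closes; just be explicit that the reduction itself relies on (\ref{brz3}) rather than on mere $A$-associativity.
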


If  $A\ot _{R, \sigma }V$ is a crossed product, we introduce the 
following Sweedler-type notation:
\begin{eqnarray*}
&&R:V\ot A\rightarrow A\ot V, \;\;\;R(v\ot a)=a_R\ot v_R, \\
&&\sigma :V\ot V\rightarrow A\ot V, \;\;\;\sigma (v\ot v')=\sigma _1(v, v') 
\ot \sigma _2(v, v'), 
\end{eqnarray*} 
for all $v, v'\in V$ and $a\in A$. With this notation, the multiplication of 
 $A\ot _{R, \sigma }V$ reads
\begin{eqnarray*}
&&(a\ot v)(a'\ot v')=aa'_R\sigma _1(v_R, v')\ot \sigma _2(v_R, v'), \;\;\;
\forall \;a, a'\in A, \;v, v'\in V.
\end{eqnarray*}

A twisted tensor product is a particular case of a crossed product 
(cf. \cite{guccione}), namely, if $A\ot _RB$ is a twisted tensor product of 
algebras then $A\ot _RB=A\ot _{R, \sigma }B$, where 
$\sigma :B\ot B\rightarrow A\ot B$ 
is given by $\sigma (b\ot b')=1_A\ot bb'$, for all $b, b'\in B$. 
\begin{remark}
The conditions (\ref{brz3}),  (\ref{brz4}) and (\ref{brz5}) for $R$, $\sigma $ 
may be written 
in Sweedler-type notation respectively as 
\begin{eqnarray}
&&(aa')_R\ot v_R=a_Ra'_r\ot v_{R_r}, \label{brz3'} \\
&&\sigma _1(y, z)_R\sigma _1(x_R, \sigma _2(y, z))\ot 
\sigma _2(x_R, \sigma _2(y, z)) \nonumber \\
&&\;\;\;\;\;\;\;\;\;\;=\sigma _1(x, y)\sigma _1(\sigma _2(x, y), z)\ot 
\sigma _2(\sigma _2(x, y), z), \label{brz4'}\\
&&a_{R_r}\sigma _1(v_r, v'_R)\ot \sigma _2(v_r, v'_R)
=\sigma _1(v, v')a_R\ot \sigma _2(v, v')_R, \label{brz5'}
\end{eqnarray}
for all $a, a'\in A$ and $x, y, z, v, v'\in V,$ where $r$ is another copy of $R$. 
\end{remark}
%%%%%%%%%%%%%%%%%%%%%%%%%%%%%%
\section{The main result and its particular cases}
%%%%%%%%%%%%%%%%%%%%%%%%%%%%%
\setcounter{equation}{0}
%%%%%%%%%%%%%%%%%%%%%%%%%%%%
${\;\;\;\;}$
We begin by defining the mirror version of 
Brzezi\'{n}ski's crossed product; the proof is 
similar to the one in \cite{brz} and will be omitted. 
\begin{theorem}\label{mainmirror}
Let $(B, \mu , 1_B)$ be an (associative unital) algebra and 
$W$ a vector space equipped with a distinguished element 
$1_W\in W$. Then the vector space $W\ot B$ is an associative 
algebra with unit $1_W\ot 1_B$ and whose 
multiplication has the property that 
$(w\ot b)(1_W\ot b')=w\ot bb'$, for all $b, b'\in B$ and 
$w\in W$, if and only if there exist linear maps 
$\nu :W\ot W\rightarrow W\ot B$ and 
$P:B\ot W\rightarrow W\ot B$ satisfying the following conditions: 
\begin{eqnarray}
&&P(b\ot 1_W)=1_W\ot b, \;\;\; P(1_B\ot w)=w\ot 1_B, \;\;\;
\forall \; b\in B, \;w\in W, \label{mirtwunit} \\
&&\nu (w\ot 1_W)=\nu (1_W\ot w)=w\ot 1_B, \;\;\;
\forall \;w\in W, \label{mircocunit} \\
&&P\circ (\mu \ot id_W)=(id_W\ot \mu )\circ 
(P\ot id_B)\circ (id_B\ot P), \label{mirtwmap}\\
&&(id_W\ot \mu )\circ (\nu \ot id_B)\circ 
(id_W\ot P)\circ (\nu \ot id_W)\nonumber \\
&&\;\;\;\;\;\;\;\;\;\;
=(id_W\ot \mu )\circ (\nu \ot id_B)\circ (id_W\ot \nu ), 
\label{mir1}\\
&&(id_W\ot \mu )\circ (\nu \ot id_B)\circ (id_W\ot P)
\circ (P\ot id_W)\nonumber \\
&&\;\;\;\;\;\;\;\;\;\;
=(id_W\ot \mu )\circ (P\ot id_B)\circ (id_B\ot \nu ).
\label{mir2}
\end{eqnarray}
If this is the case, the multiplication of $W\ot B$ is given 
explicitely by 
\begin{eqnarray*}
&&\mu _{W\ot B}=(id_W\ot \mu _2)\circ (\nu \ot id_B
\ot id_B)\circ (id_W\ot P\ot id_B), 
\end{eqnarray*}
where $\mu _2=\mu \circ (id_B\ot \mu )=
\mu \circ (\mu \ot id_B)$. We denote by 
$W\overline{\ot}_{P, \nu }B$ this algebra structure 
and call it the {\bf crossed product} 
afforded by the data $(W, B, P, \nu )$. 
\end{theorem}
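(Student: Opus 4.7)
The plan is to adapt Brzeziński's original proof to this left/right-reversed setting; the statement is also obtainable from the original by passing to opposite algebras and swapping tensor factors, but a direct verification is cleaner.

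For the ``only if'' direction, I would start from the given multiplication and extract the maps by setting
$$P(b\ot w):=(1_W\ot b)(w\ot 1_B),\qquad \nu (w\ot w'):=(w\ot 1_B)(w'\ot 1_B).$$
The unit conditions (\ref{mirtwunit}) and (\ref{mircocunit}) are then immediate. Each of (\ref{mirtwmap}), (\ref{mir1}) and (\ref{mir2}) will be read off from associativity of a specific triple: the triple $(1_W\ot b,\,1_W\ot b',\,w\ot 1_B)$ yields (\ref{mirtwmap}), the triple $(w\ot 1_B,\,w'\ot 1_B,\,w''\ot 1_B)$ yields (\ref{mir1}), and the triple $(1_W\ot b,\,w\ot 1_B,\,w'\ot 1_B)$ yields (\ref{mir2}). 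In each case one expands both bracketings using only the definitions of $P$ and $\nu $ together with the hypothesis $(w\ot b)(1_W\ot b')=w\ot bb'$.

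For the ``if'' direction, I would define the multiplication on $W\ot B$ by the explicit formula in the statement. A short check using (\ref{mirtwunit}) and (\ref{mircocunit}) shows that $1_W\ot 1_B$ is a unit, that $(w\ot 1_B)(1_W\ot b)=w\ot b$, and that $(w\ot b)(1_W\ot b')=w\ot bb'$ (the latter from $P(b\ot 1_W)=1_W\ot b$ combined with $\nu (w\ot 1_W)=w\ot 1_B$). The real content is associativity, which I plan to verify by applying the explicit formula to both bracketings of $(w\ot b)(w'\ot b')(w''\ot b'')$ and matching the outputs step by step.

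The main obstacle is precisely this associativity computation: each bracketing, when expanded, yields a composition of several nested copies of $P$ and $\nu $ together with one or two applications of $\mu $. The strategy will be to use (\ref{mirtwmap}) to push the $B$-multiplications past the $W$-factors, (\ref{mir2}) to swap a $P$ past a $\nu $, and finally (\ref{mir1}) to match the resulting $\nu $-patterns; the unit axioms enter only at the boundaries. This is the verbatim mirror of the computation in \cite{brz}, which is why the author defers to that reference.
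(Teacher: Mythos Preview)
Your proposal is correct and is exactly the approach the paper indicates: the paper omits the proof entirely, stating only that it is ``similar to the one in \cite{brz}'', and your plan is precisely the mirror of Brzezi\'{n}ski's argument (including the same extraction of $P$ and $\nu$ from the multiplication and the same choice of associativity triples). There is nothing to add.
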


If $W\oot _{P, \nu }B$ is a crossed product, we use the 
Sweedler-type notation 
\begin{eqnarray*}
&&P:B\ot W\rightarrow W\ot B, \;\;\;P(b\ot w)=w_P\ot b_P, \\
&&\nu :W\ot W\rightarrow W\ot B, \;\;\;\nu (w\ot w')=
\nu _1(w, w')\ot \nu _2(w, w'), 
\end{eqnarray*}
for all $w, w'\in W$ and $b\in B$. With this notation, 
the multiplication of $W\oot _{P, \nu }B$ reads 
\begin{eqnarray*}
&&(w\ot b)(w'\ot b')=\nu _1(w, w'_P)\ot 
\nu _2(w, w'_P)b_Pb', \;\;\;\forall \;b, b'\in B, \;w, w'\in W. 
\end{eqnarray*}

If $A\ot _RB$ is a twisted tensor product product of algebras, 
then $A\ot _RB=A\oot _{R, \nu }B$, where 
$\nu :A\ot A\rightarrow A\ot B$, 
$\nu (a\ot a')=aa'\ot 1_B$ for all $a, a'\in A$. 
\begin{remark}
The conditions (\ref{mirtwmap}), (\ref{mir1}) and (\ref{mir2}) 
may be written down respectively as
\begin{eqnarray}
&&w_P\ot (bb')_P=w_{P_p}\ot b_pb'_P, \label{expmirtwmap}\\
&&\nu _1(\nu _1(w, w'), w''_P)\ot 
\nu _2(\nu _1(w, w'), w''_P)\nu _2(w, w')_P\nonumber \\
&&\;\;\;\;\;\;\;\;\;\;
=\nu _1(w, \nu _1(w', w''))\ot \nu _2(w, \nu _1(w', w''))
\nu _2(w', w''), \label{expmir1} \\
&&\nu _1(w_P, w'_p)\ot \nu _2(w_P, w'_p)b_{P_p}
=\nu _1(w, w')_P\ot b_P\nu _2(w, w'), \label{expmir2}
\end{eqnarray}
for all $w, w', w''\in W$ and $b, b'\in B$, where 
$p$ is another copy of $P$. 
\end{remark}

Now we can prove that under certain circumstances the two 
versions of crossed products may be iterated:
\begin{theorem} \label{mainitercros}
Let $W\oot _{P, \nu }D$ and $D\ot _{R, \sigma }V$ be two 
crossed products. Assume that we have a linear map 
$Q:V\ot W\rightarrow W\ot D\ot V$, with notation 
$Q(v\ot w)=Q_W(v, w)\ot Q_D(v, w)\ot Q_V(v, w)$, 
for all $v\in V$, $w\in W$, such that $Q(1_V\ot w)=
w\ot 1_D\ot 1_V$, $Q(v\ot 1_W)=1_W\ot 1_D\ot v$, 
for all $v\in V$, $w\in W$, and the following conditions are 
satisfied:
\begin{eqnarray}
&&(id_W\ot \mu _D\ot id_V)\circ (id_W\ot id_D\ot R)
\circ (Q\ot id_D)\circ (id_V\ot P)\nonumber \\
&&\;\;\;\;\;\;\;\;\;\;
=(id_W\ot \mu _D\ot id_V)\circ (P\ot id_D\ot id_V)
\circ (id_D\ot Q)\circ (R\ot id_W), \label{braidgen} \\
&&(id_W\ot \mu _D\ot id_V)\circ (id_W\ot id_D\ot R)\circ 
(Q\ot id_D)\circ (id_V\ot \nu )\nonumber \\
&&\;\;\;\;\;\;\;\;\;\;
=(id_W\ot \mu _D\ot id_V)\circ (\nu \ot \mu _D\ot id_V)
\circ (id_W\ot P\ot id_D\ot id_V)\nonumber \\
&&\;\;\;\;\;\;\;\;\;\;\;\;\;\;\;\;\;\;\;
\circ (id_W\ot id_D\ot Q)
\circ (Q\ot id_W), \label{twgen1}\\
&&(id_W\ot \mu _D\ot id_V)\circ (P\ot id_D\ot id_V)\circ 
(id_D\ot Q)\circ (\sigma \ot id_W)\nonumber \\
&&\;\;\;\;\;\;\;\;\;\;
=(id_W\ot \mu _D\ot id_V)\circ (id_W\ot \mu _D\ot \sigma )
\circ (id_W\ot id_D\ot R\ot id_V)\nonumber \\
&&\;\;\;\;\;\;\;\;\;\;\;\;\;\;\;\;\;\;\;
\circ (Q\ot id_D\ot id_V)\circ (id_V\ot Q). \label{twgen2}
\end{eqnarray}
(i) Define the maps 
\begin{eqnarray*}
&&\overline{\sigma}:V\ot V\rightarrow 
(W\oot _{P, \nu }D)\ot V, \\
&&\overline{\sigma }(v\ot v')=(1_W\ot \sigma _1(v, v'))\ot 
\sigma _2(v, v'), \;\;\;\forall \; v, v'\in V, \\
&&\overline{R}:V\ot (W\oot _{P, \nu }D)\rightarrow 
(W\oot _{P, \nu }D)\ot V, \\
&&\overline{R}=(id_W\ot \mu _D\ot id_V)\circ 
(id_W\ot id_D\ot R)\circ (Q\ot id_D)
\end{eqnarray*}
(i.e. for all $v\in V$, $w\in W$, 
$d\in D$ we have 
$\overline{R}(v\ot w\ot d)=Q_W(v, w)\ot 
Q_D(v, w)d_R\ot Q_V(v, w)_R$). 
Then we have a crossed product 
$(W\oot _{P, \nu }D)\ot _{\overline{R}, \overline{\sigma }}V$. \\
(ii) Define the maps 
\begin{eqnarray*}
&&\overline{\nu }:W\ot W\rightarrow 
W\ot (D\ot _{R, \sigma }V), \\
&&\overline{\nu }(w\ot w')=\nu _1(w, w')\ot 
(\nu _2(w, w')\ot 1_V), \;\;\;\forall \; w, w'\in W, \\
&&\overline{P}:(D\ot _{R, \sigma }V)\ot W\rightarrow W\ot 
(D\ot _{R, \sigma }V), \\
&&\overline{P}=(id_W\ot \mu _D\ot id_V)\circ 
(P\ot id_D\ot id_V)\circ (id_D\ot Q)
\end{eqnarray*}
(i.e. for all $v\in V$, $w\in W$, 
$d\in D$ we have $\overline{P}(d\ot v\ot w)=
Q_W(v, w)_P\ot d_PQ_D(v, w)\ot Q_V(v, w)$). 
Then we have a crossed product 
$W\oot _{\overline{P}, \overline{\nu }}(D\ot _{R, \sigma }V)$. \\
(iii) We have an algebra isomorphism 
$(W\oot _{P, \nu }D)\ot _{\overline{R}, \overline{\sigma }}V\cong 
W\oot _{\overline{P}, \overline{\nu }}(D\ot _{R, \sigma }V)$ 
given by the trivial identification.
\end{theorem}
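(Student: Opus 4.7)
The plan is to treat the three parts in order: establish (i) by verifying the crossed-product axioms of Proposition~\ref{defbrz} for $\overline{R}$ and $\overline{\sigma}$, establish (ii) by verifying the mirror axioms of Theorem~\ref{mainmirror} for $\overline{P}$ and $\overline{\nu}$, and then deduce (iii) by writing out both multiplications on $W\otimes D\otimes V$ and comparing. The available ingredients are the unit normalizations of $Q$, $R$, $\sigma$, $P$, $\nu$, the crossed-product axioms for $(R,\sigma)$ and $(P,\nu)$, and the three compatibility hypotheses \eqref{braidgen}, \eqref{twgen1}, \eqref{twgen2}.

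For part (i), the unit axioms \eqref{brz1} and \eqref{brz2} for $\overline{R},\overline{\sigma}$ follow directly from $Q(1_V\otimes w)=w\otimes 1_D\otimes 1_V$, $Q(v\otimes 1_W)=1_W\otimes 1_D\otimes v$, the unit normalizations of $R,\sigma$, and the identity $1_{W\oot_{P,\nu}D}=1_W\otimes 1_D$. The cocycle axiom \eqref{brz4} for $\overline{\sigma}$ collapses to \eqref{brz4'} for $\sigma$ itself, once one observes that $(1_W\otimes d)(1_W\otimes d')=1_W\otimes dd'$ in $W\oot_{P,\nu}D$. The substantive axiom \eqref{brz3} is proved by unpacking the mirror product via $\nu$ and $P$, applying $\overline{R}$, and then invoking \eqref{braidgen} to push $R$ past $P$, \eqref{twgen1} to push $Q$ past $\nu$, and \eqref{brz3'}, \eqref{expmirtwmap} for the bookkeeping; axiom \eqref{brz5} is handled analogously using \eqref{twgen2} together with \eqref{brz5'}. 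Part (ii) is strictly parallel: the unit axioms and the cocycle \eqref{mir1} for $\overline{\nu}$ reduce routinely because $\overline{\nu}$ appends $1_V$ and $(d\otimes 1_V)(d'\otimes 1_V)=dd'\otimes 1_V$ in $D\otimes_{R,\sigma}V$; the twisting-like axiom \eqref{mirtwmap} for $\overline{P}$ is obtained by unfolding the $D\otimes_{R,\sigma}V$-product and using \eqref{braidgen}, \eqref{twgen2} together with \eqref{expmirtwmap}; and \eqref{mir2} follows from \eqref{twgen1} combined with \eqref{expmir2}.

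For part (iii), direct substitution into the multiplication formulas of Proposition~\ref{defbrz} and Theorem~\ref{mainmirror}, followed by elementary simplifications using the unit normalizations (which cause the trivial factors of $1_W$ in $\overline{\sigma}$ and $1_V$ in $\overline{\nu}$ to disappear), shows that both products reduce on $W\otimes D\otimes V$ to the common formula
\[
(w\otimes d\otimes v)(w'\otimes d'\otimes v')=\alpha\otimes\beta\otimes\gamma,
\]
where $\alpha=\nu_1(w, Q_W(v,w')_P)$, $\beta=\nu_2(w, Q_W(v,w')_P)\,d_P\,Q_D(v,w')\,d'_R\,\sigma_1(Q_V(v,w')_R, v')$ and $\gamma=\sigma_2(Q_V(v,w')_R, v')$; the identity map on $W\otimes D\otimes V$ is therefore the required algebra isomorphism. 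The main obstacle will be the verifications of \eqref{brz3} and \eqref{mirtwmap}: these are the bulky diagrammatic identities in which the braid-type hypothesis \eqref{braidgen} must be deployed in concert with the twisting axioms for both $R$ and $P$, and, with several independent copies of each map appearing, careful Sweedler-index bookkeeping is the real difficulty. The remaining axioms, once this pattern is understood, follow by shorter variations on the same theme.
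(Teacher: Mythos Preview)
Your proposal is correct and follows essentially the same strategy as the paper: a direct verification of the crossed-product axioms \eqref{brz1}--\eqref{brz5} for $(\overline{R},\overline{\sigma})$ (and, dually, \eqref{mirtwunit}--\eqref{mir2} for $(\overline{P},\overline{\nu})$), using precisely the combinations of \eqref{braidgen}, \eqref{twgen1}, \eqref{twgen2} you indicate, followed by an explicit comparison of the two multiplications for (iii). Your common multiplication formula on $W\otimes D\otimes V$ coincides with the one obtained in the paper, and your assignment of which compatibility hypothesis drives each axiom matches the paper's computations.
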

\begin{proof}
Note first that the relations (\ref{braidgen}), 
(\ref{twgen1}) and (\ref{twgen2}) may be written 
down respectively as 
\begin{eqnarray}
&&Q_W(v, w_P)\ot Q_D(v, w_P)d_{P_R}\ot 
Q_V(v, w_P)_R\nonumber \\
&&\;\;\;\;\;
=Q_W(v_R, w)_P\ot d_{R_P}Q_D(v_R, w)\ot 
Q_V(v_R, w), \label{concr1} \\
&&Q_W(v, \nu _1(w, w'))\ot Q_D(v, \nu _1(w, w'))
\nu _2(w, w')_R\ot Q_V(v, \nu _1(w, w'))_R\nonumber \\
&&\;\;\;\;\;
=\nu _1(Q_W(v, w), Q_W(Q_V(v, w), w')_P)\ot 
\nu _2(Q_W(v, w), Q_W(Q_V(v, w), w')_P)\nonumber \\
&&\;\;\;\;\;\;\;\;\;\;
Q_D(v, w)_PQ_D(Q_V(v, w), w')
\ot Q_V(Q_V(v, w), w'), \label{concr2}\\
&&Q_W(\sigma _2(v, v'), w)_P\ot \sigma _1(v, v')_P
Q_D(\sigma _2(v, v'), w)\ot 
Q_V(\sigma _2(v, v'), w)\nonumber \\
&&\;\;\;\;\;
=Q_W(v, Q_W(v', w))\ot Q_D(v, Q_W(v', w))
Q_D(v', w)_R\nonumber \\
&&\;\;\;\;\;\;\;\;\;\;
\sigma _1(Q_V(v, Q_W(v', w))_R, Q_V(v', w))\ot 
\sigma _2(Q_V(v, Q_W(v', w))_R, Q_V(v', w)), \label{concr3}
\end{eqnarray}
for all $d\in D$, $v, v'\in V$ and $w, w'\in W$. 

We will only prove (i) and (iii), while (ii) is similar 
to (i) and left to the reader.\\
\underline{Proof of (i)}:\\[2mm]
The conditions (\ref{brz1}) and (\ref{brz2}) are very easy 
to prove and are left to the reader. We denote as usual 
by $R=r=\mathcal{R}=\overline{R}$ some more copies 
of $R$ and by $p$ another copy of $P$.  \\
\underline{Proof of (\ref{brz3})}:\\[2mm]
Let $v\in V$, $w, w'\in W$ and $d, d'\in D$; we compute:\\[2mm]
${\;\;\;}$
$(\mu _{W\oot _{P, \nu }D}\ot id_V)\circ 
(id_{W\oot _{P, \nu }D}\ot \overline{R})\circ 
(\overline{R}\ot id_{W\oot _{P, \nu }D})
(v\ot w\ot d\ot w'\ot d')$
\begin{eqnarray*}
&=&(\mu _{W\oot _{P, \nu }D}\ot id_V)\circ 
(id_{W\oot _{P, \nu }D}\ot \overline{R})
(Q_W(v, w)\ot Q_D(v, w)d_R\ot Q_V(v, w)_R\ot w'\ot d')\\
&=&(\mu _{W\oot _{P, \nu }D}\ot id_V)
(Q_W(v, w)\ot Q_D(v, w)d_R\ot Q_W(Q_V(v, w)_R, w')\\
&&\ot Q_D(Q_V(v, w)_R, w')d'_r\ot Q_V(Q_V(v, w)_R, w')_r)\\
&=&\nu _1(Q_W(v, w), Q_W(Q_V(v, w)_R, w')_P)\ot 
\nu _2(Q_W(v, w), Q_W(Q_V(v, w)_R, w')_P)\\
&&[Q_D(v, w)d_R]_PQ_D(Q_V(v, w)_R, w')d'_r\ot 
Q_V(Q_V(v, w)_R, w')_r\\
&\overset{(\ref{expmirtwmap})}{=}&
\nu _1(Q_W(v, w), Q_W(Q_V(v, w)_R, w')_{P_p})\ot 
\nu _2(Q_W(v, w), Q_W(Q_V(v, w)_R, w')_{P_p})\\
&&Q_D(v, w)_pd_{R_P}Q_D(Q_V(v, w)_R, w')d'_r\ot 
Q_V(Q_V(v, w)_R, w')_r\\
&\overset{(\ref{concr1})}{=}&
\nu _1(Q_W(v, w), Q_W(Q_V(v, w), w'_P)_p)\ot 
\nu _2(Q_W(v, w), Q_W(Q_V(v, w), w'_P)_p)\\
&&Q_D(v, w)_pQ_D(Q_V(v, w), w'_P)d_{P_R}d'_r\ot 
Q_V(Q_V(v, w), w'_P)_{R_r}\\
&\overset{(\ref{concr2})}{=}&
Q_W(v, \nu _1(w, w'_P))\ot 
Q_D(v, \nu _1(w, w'_P))\nu _2(w, w'_P)_{\mathcal{R}}
d_{P_R}d'_r\ot Q_V(v, \nu _1(w, w'_P))_{\mathcal{R}_{R_r}}\\
&\overset{(\ref{brz3'})}{=}&
Q_W(v, \nu _1(w, w'_P))\ot 
Q_D(v, \nu _1(w, w'_P))[\nu _2(w, w'_P)
d_Pd']_R\ot Q_V(v, \nu _1(w, w'_P))_R\\
&=&\overline{R}(v\ot \nu _1(w, w'_P)\ot \nu _2(w, w'_P)d_Pd')\\
&=&\overline{R}\circ (id_V\ot \mu _{W\oot _{P, \nu }D})
(v\ot w\ot d\ot w'\ot d'), \;\;\;q.e.d.
\end{eqnarray*}
\underline{Proof of (\ref{brz4})}:\\[2mm]
For $v, v', v''\in V$ we compute:\\[2mm]
${\;\;\;}$
$(\mu _{W\oot _{P, \nu }D}\ot id_V)\circ 
(id_{W\oot _{P, \nu }D}\ot \overline{\sigma })
\circ (\overline{R}\ot id_V)\circ 
(id_V\ot \overline{\sigma })(v\ot v'\ot v'')$
\begin{eqnarray*}
&=&(\mu _{W\oot _{P, \nu }D}\ot id_V)\circ 
(id_{W\oot _{P, \nu }D}\ot \overline{\sigma })
\circ (\overline{R}\ot id_V)
(v\ot 1_W\ot \sigma _1(v', v'')\ot \sigma _2(v', v''))\\
&=&(\mu _{W\oot _{P, \nu }D}\ot id_V)\circ 
(id_{W\oot _{P, \nu }D}\ot \overline{\sigma })
(Q_W(v, 1_W)\ot Q_D(v, 1_W)\sigma _1(v', v'')_R\\
&&\ot Q_V(v, 1_W)_R\ot \sigma _2(v', v''))\\
&=&(\mu _{W\oot _{P, \nu }D}\ot id_V)\circ 
(id_{W\oot _{P, \nu }D}\ot \overline{\sigma })
(1_W\ot \sigma _1(v', v'')_R
\ot v_R\ot \sigma _2(v', v''))\\
&=&(\mu _{W\oot _{P, \nu }D}\ot id_V)
(1_W\ot \sigma _1(v', v'')_R\ot 1_W\ot 
\sigma _1(v_R, \sigma _2(v', v''))\ot 
\sigma _2(v_R, \sigma _2(v', v'')))\\
&=&1_W\ot \sigma _1(v', v'')_R
\sigma _1(v_R, \sigma _2(v', v''))\ot 
\sigma _2(v_R, \sigma _2(v', v''))\\
&\overset{(\ref{brz4'})}{=}&
1_W\ot \sigma _1(v, v')
\sigma _1(\sigma _2(v, v'), v'')\ot 
\sigma _2(\sigma _2(v, v'), v'')\\
&=&(\mu _{W\oot _{P, \nu }D}\ot id_V)
(1_W\ot \sigma _1(v, v')\ot 1_W\ot \sigma _1(\sigma _2(v, v'), v'')
\ot \sigma _2(\sigma _2(v, v'), v''))\\
&=&(\mu _{W\oot _{P, \nu }D}\ot id_V)\circ 
(id_{W\oot _{P, \nu }D}\ot \overline{\sigma })
(1_W\ot \sigma _1(v, v')\ot \sigma _2(v, v')\ot v'')\\
&=&(\mu _{W\oot _{P, \nu }D}\ot id_V)\circ 
(id_{W\oot _{P, \nu }D}\ot \overline{\sigma })
\circ (\overline{\sigma }\ot id_V)(v\ot v'\ot v''), \;\;\;q.e.d.
\end{eqnarray*}
\underline{Proof of (\ref{brz5})}:\\[2mm]
For $v, v'\in V$, $w\in W$ and $d\in D$ we compute:\\[2mm]
${\;\;\;}$
$(\mu _{W\oot _{P, \nu }D}\ot id_V)
\circ (id_{W\oot _{P, \nu }D}\ot \overline{\sigma })
\circ (\overline{R}\ot id_V)\circ 
(id_V\ot \overline{R})(v\ot v'\ot w\ot d)$
\begin{eqnarray*}
&=&(\mu _{W\oot _{P, \nu }D}\ot id_V)
\circ (id_{W\oot _{P, \nu }D}\ot \overline{\sigma })
\circ (\overline{R}\ot id_V)(v\ot Q_W(v', w)\\
&&\ot 
Q_D(v', w)d_R\ot Q_V(v', w)_R)\\
&=&(\mu _{W\oot _{P, \nu }D}\ot id_V)
\circ (id_{W\oot _{P, \nu }D}\ot \overline{\sigma })
(Q_W(v, Q_W(v', w))\\
&&\ot Q_D(v, Q_W(v', w))
[Q_D(v', w)d_R]_r\ot Q_V(v, Q_W(v', w))_r\ot 
Q_V(v', w)_R)\\
&=&(\mu _{W\oot _{P, \nu }D}\ot id_V)
(Q_W(v, Q_W(v', w))\ot Q_D(v, Q_W(v', w))
[Q_D(v', w)d_R]_r\ot 1_W\\
&&\ot \sigma _1(Q_V(v, Q_W(v', w))_r, 
Q_V(v', w)_R)\ot \sigma _2(Q_V(v, Q_W(v', w))_r, 
Q_V(v', w)_R))\\
&=&Q_W(v, Q_W(v', w))\ot Q_D(v, Q_W(v', w))
[Q_D(v', w)d_R]_r\\
&&\sigma _1(Q_V(v, Q_W(v', w))_r, 
Q_V(v', w)_R)\ot \sigma _2(Q_V(v, Q_W(v', w))_r, 
Q_V(v', w)_R))\\
&\overset{(\ref{brz3'})}{=}&
Q_W(v, Q_W(v', w))\ot Q_D(v, Q_W(v', w))
Q_D(v', w)_{\mathcal{R}}d_{R_r}\\
&&\sigma _1(Q_V(v, Q_W(v', w))_{\mathcal{R}_r}, 
Q_V(v', w)_R)\ot \sigma _2(Q_V(v, Q_W(v', w))_{\mathcal{R}_r}, 
Q_V(v', w)_R)\\
&\overset{(\ref{brz5'})}{=}&
Q_W(v, Q_W(v', w))\ot Q_D(v, Q_W(v', w))
Q_D(v', w)_{\mathcal{R}}\\
&&\sigma _1(Q_V(v, Q_W(v', w))_{\mathcal{R}}, 
Q_V(v', w))d_R 
\ot \sigma _2(Q_V(v, Q_W(v', w))_{\mathcal{R}}, 
Q_V(v', w))_R\\
&\overset{(\ref{concr3})}{=}&
Q_W(\sigma _2(v, v'), w)_P\ot \sigma _1(v, v')_P
Q_D(\sigma _2(v, v'), w)d_R\ot 
Q_V(\sigma _2(v, v'), w)_R\\
&\overset{(\ref{mircocunit})}{=}&
\nu _1(1_W, Q_W(\sigma _2(v, v'), w)_P)\ot 
\nu _2(1_W, Q_W(\sigma _2(v, v'), w)_P)
\sigma _1(v, v')_P\\
&&Q_D(\sigma _2(v, v'), w)d_R\ot 
Q_V(\sigma _2(v, v'), w)_R\\
&=&(\mu _{W\oot _{P, \nu }D}\ot id_V)(1_W\ot 
\sigma _1(v, v')\ot Q_W(\sigma _2(v, v'), w)\\
&&\ot 
Q_D(\sigma _2(v, v'), w)d_R\ot 
Q_V(\sigma _2(v, v'), w)_R)\\
&=&(\mu _{W\oot _{P, \nu }D}\ot id_V)\circ 
(id_{W\oot _{P, \nu }D}\ot \overline{R})
(1_W\ot \sigma _1(v, v')\ot \sigma _2(v, v')\ot 
w\ot d)\\
&=&(\mu _{W\oot _{P, \nu }D}\ot id_V)\circ 
(id_{W\oot _{P, \nu }D}\ot \overline{R})\circ 
(\overline{\sigma }\ot id_{W\oot _{P, \nu }D})
(v\ot v'\ot w\ot d), \;\;\;q.e.d.
\end{eqnarray*}
\underline{Proof of (iii)}:\\[2mm]
The multiplication in 
$(W\oot _{P, \nu }D)\ot _{\overline{R}, 
\overline{\sigma }}V$ reads:\\[2mm]
${\;\;\;\;\;}$
$((w\ot d)\ot v)((w'\ot d')\ot v')$
\begin{eqnarray*}
&=&(w\ot d)(w'\ot d')_{\overline{R}}
\overline{\sigma }_1(v_{\overline{R}}, v')\ot 
\overline{\sigma }_2(v_{\overline{R}}, v')\\
&=&(w\ot d)(Q_W(v, w')\ot Q_D(v, w')d'_R)
\overline{\sigma }_1(Q_V(v, w')_R, v')\ot 
\overline{\sigma }_2(Q_V(v, w')_R, v')\\
&=&(w\ot d)(Q_W(v, w')\ot Q_D(v, w')d'_R)
(1_W\ot \sigma _1(Q_V(v, w')_R, v'))\ot 
\sigma _2(Q_V(v, w')_R, v')\\
&=&(w\ot d)(Q_W(v, w')\ot Q_D(v, w')d'_R
\sigma _1(Q_V(v, w')_R, v'))\ot 
\sigma _2(Q_V(v, w')_R, v')\\
&=&\nu _1(w, Q_W(v, w')_P)\ot 
\nu _2(w, Q_W(v, w')_P)d_PQ_D(v, w')d'_R
\sigma _1(Q_V(v, w')_R, v')\\
&&\ot \sigma _2(Q_V(v, w')_R, v').
\end{eqnarray*}
The multiplication in 
$W\oot _{\overline{P}, \overline{\nu }}(D\ot _{R, \sigma }V)$
reads:\\[2mm]
${\;\;\;\;\;}$
$(w\ot (d\ot v))(w'\ot (d'\ot v'))$
\begin{eqnarray*}
&=&\overline{\nu }_1(w, w'_{\overline{P}})\ot 
\overline{\nu }_2(w, w'_{\overline{P}})
(d\ot v)_{\overline{P}}(d'\ot v')\\
&=&\overline{\nu }_1(w, Q_W(v, w')_P)\ot 
\overline{\nu }_2(w, Q_W(v, w')_P)
(d_PQ_D(v, w')\ot Q_V(v, w'))(d'\ot v')\\
&=&\nu _1(w, Q_W(v, w')_P)\ot 
(\nu _2(w, Q_W(v, w')_P)\ot 1_V)
(d_PQ_D(v, w')\ot Q_V(v, w'))(d'\ot v')\\
&=&\nu _1(w, Q_W(v, w')_P)\ot 
(\nu _2(w, Q_W(v, w')_P)
d_PQ_D(v, w')\ot Q_V(v, w'))(d'\ot v')\\
&=&\nu _1(w, Q_W(v, w')_P)\ot 
\nu _2(w, Q_W(v, w')_P)
d_PQ_D(v, w')d'_R
\sigma _1(Q_V(v, w')_R, v')\\
&&\ot 
\sigma _2(Q_V(v, w')_R, v'),
\end{eqnarray*}
and we can see that the two multiplications are 
defined by the same formula.
\end{proof}
\begin{definition}
In the hypotheses of Theorem \ref{mainitercros}, the algebra structure 
on $W\ot D\ot V$ arising from (iii) in the theorem will be called the 
{\em iterated crossed product} afforded by the map $Q$. 
\end{definition}

Theorem \ref{mainitercros} admits the following converse: 
\begin{theorem} \label{converse}
Let $W\oot _{P, \nu }D$ and $D\ot _{R, \sigma }V$ be two 
crossed products. Suppose that on $W\ot D\ot V$ we have an 
associative unital algebra structure (with multiplication 
denoted by $\cdot $) such that 
\begin{eqnarray*}
&&W\oot _{P, \nu }D\rightarrow W\ot D\ot V, \;\;\;w\ot d\mapsto 
w\ot d\ot 1_V, \\
&&D\ot _{R, \sigma }V\rightarrow W\ot D\ot V, \;\;\;d\ot v\mapsto 1_W\ot d\ot v,
\end{eqnarray*}
are algebra maps (in particular, it follows that the unit of $W\ot D\ot V$ is 
$1_W\ot 1_D\ot 1_V$) and moreover, for all $w\in W$, $d\in D$, $v\in V$ we have
\begin{eqnarray}
&&w\ot d\ot v=(w\ot 1_D\ot 1_V)\cdot (1_W\ot d\ot 1_V)\cdot 
(1_W\ot 1_D\ot v). \label{canon}
\end{eqnarray}
Then there exists a linear map $Q:V\ot W\rightarrow W\ot D\ot V$ satisfying the 
hypotheses of Theorem \ref{mainitercros} and such that the given algebra 
structure on $W\ot D\ot V$ coincides with the iterated crossed product 
afforded by $Q$. More precisely, the map $Q$ is defined by 
\begin{eqnarray}
&&Q(v\ot w):=(1_W\ot 1_D\ot v)\cdot (w\ot 1_D\ot 1_V), \;\;\;\forall \;\;
v\in V, \;w\in W. \label{formulaQ}
\end{eqnarray}
\end{theorem}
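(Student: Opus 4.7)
My plan is to set $Q$ by (\ref{formulaQ}) and verify in turn (a) the unit conditions on $Q$, (b) the three hypotheses (\ref{braidgen})--(\ref{twgen2}) of Theorem \ref{mainitercros}, and (c) that the multiplication of $W\ot D\ot V$ given by hypothesis coincides with the one of the iterated crossed product afforded by $Q$. The unit conditions $Q(1_V\ot w)=w\ot 1_D\ot 1_V$ and $Q(v\ot 1_W)=1_W\ot 1_D\ot v$ are immediate from $1_W\ot 1_D\ot 1_V$ being the unit. Before tackling (b) and (c) I would record two preliminary identities, obtained from (\ref{canon}) together with the hypothesis that the two given inclusions are algebra maps:
\begin{eqnarray*}
&&(1_W\ot 1_D\ot v)\cdot (w\ot d\ot 1_V)=Q_W(v,w)\ot Q_D(v,w)d_R\ot Q_V(v,w)_R,\\
&&(1_W\ot d\ot 1_V)\cdot (w\ot 1_D\ot 1_V)=w_P\ot d_P\ot 1_V.
\end{eqnarray*}
The first follows by applying (\ref{canon}) to $w\ot d\ot 1_V$, then the definition of $Q$, then re-decomposing $Q(v\ot w)$ via (\ref{canon}) and collapsing consecutive factors in the image of $D\ot_{R,\sigma}V$; the second is just the multiplication rule of $W\oot_{P,\nu}D$ read through its embedding.

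Each of (\ref{braidgen})--(\ref{twgen2}) then comes from evaluating a carefully chosen triple product in $W\ot D\ot V$ in two bracketings and invoking associativity. For (\ref{braidgen}) I would use $(1_W\ot 1_D\ot v)\cdot (1_W\ot d\ot 1_V)\cdot (w\ot 1_D\ot 1_V)$: the left bracketing, together with the product in $D\ot_{R,\sigma}V$ and a re-application of (\ref{canon}) to the intermediate element, followed by the formula for $Q$, yields the right-hand side of the Sweedler form (\ref{concr1}), while the right bracketing yields the left-hand side. For (\ref{twgen1}) the triple product is $(1_W\ot 1_D\ot v)\cdot (w\ot 1_D\ot 1_V)\cdot (w'\ot 1_D\ot 1_V)$: one bracketing combines the two $W$-factors via $\nu$, the other applies $Q$ twice, delivering (\ref{concr2}). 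For (\ref{twgen2}) the triple product is $(1_W\ot 1_D\ot v)\cdot (1_W\ot 1_D\ot v')\cdot (w\ot 1_D\ot 1_V)$, with $\sigma$ entering on one side through the product in $D\ot_{R,\sigma}V$ and $Q$ appearing twice on the other, producing (\ref{concr3}).

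To match the two algebra structures I expand $(w\ot d\ot v)\cdot (w'\ot d'\ot v')$ by applying (\ref{canon}) to both factors, so that the only genuinely mixed piece is the central product $(1_W\ot 1_D\ot v)\cdot (w'\ot 1_D\ot 1_V)=Q(v\ot w')$; re-decomposing this via (\ref{canon}) and collapsing each maximal block of adjacent factors lying in the image of one of the two embeddings via the corresponding crossed-product multiplication, one recovers exactly the closed formula for the iterated crossed product multiplication computed at the end of the proof of Theorem \ref{mainitercros}. The main obstacle is the bookkeeping of Sweedler indices: conceptually the argument is just a repeated use of associativity in the presence of (\ref{canon}) and the two algebra-map hypotheses, but keeping track of which copy of $R$ or $P$ each subscript refers to after each reassociation, and making sure the triple-product computations land on exactly the forms (\ref{concr1})--(\ref{concr3}), requires some care.
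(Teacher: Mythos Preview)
Your proposal is correct and follows essentially the same approach as the paper: you define $Q$ by (\ref{formulaQ}), deduce the basic commutation identities from the two algebra embeddings and (\ref{canon}), and then obtain (\ref{concr1})--(\ref{concr3}) by computing the triple products $v\cdot d\cdot w$, $v\cdot w\cdot w'$, $v\cdot v'\cdot w$ in two ways, finally matching the multiplications by expanding $(w\ot d\ot v)\cdot(w'\ot d'\ot v')$ via (\ref{canon}). The only cosmetic difference is that the paper records the four elementary identities $d\cdot w=w_P\cdot d_P$, $v\cdot d=d_R\cdot v_R$, $w\cdot w'=\nu_1(w,w')\cdot\nu_2(w,w')$, $v\cdot v'=\sigma_1(v,v')\cdot\sigma_2(v,v')$ separately, whereas you package the first two into a single composite identity; the computations are otherwise the same.
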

\begin{proof}
We need to prove first that the map $Q$ defined by formula (\ref{formulaQ}) 
satisfies the conditions in Theorem \ref{mainitercros}. Since 
$1_W\ot 1_D\ot 1_V$ is the unit of $W\ot D\ot V$, it is clear that we have 
$Q(1_V\ot w)=w\ot 1_D\ot 1_V$ and $Q(v\ot 1_W)=1_W\ot 1_D\ot v$, 
for all $v\in V$ and $w\in W$. 
We will denote $Q(v\ot w)=(1_W\ot 1_D\ot v)\cdot (w\ot 1_D\ot 1_V)=
Q_W(v, w)\ot Q_D(v, w)\ot Q_V(v, w)$. By regarding $W$, $D$, and $V$ embedded 
canonically into $W\ot D\ot V$, we can regard elements $w$, $d$, $v$ as 
elements in $W\ot D\ot V$; by (\ref{canon}) we can write $w\ot d\ot v=
w\cdot d\cdot v$, for all $w\in W$, $d\in D$, $v\in V$ (in particular, we have 
$Q(v\ot w)=v\cdot w=Q_W(v, w)\cdot Q_D(v, w)\cdot Q_V(v, w)$). 
Because of the algebra embeddings of $W\oot _{P, \nu }D$ 
and $D\ot _{R, \sigma }V$ into $W\ot D\ot V$, we immediately obtain 
\begin{eqnarray}
&&d\cdot w=w_P\cdot d_P, \label{aju1} \\
&&v\cdot d=d_R\cdot v_R, \label{aju2} \\
&&w\cdot w'=\nu _1(w, w')\cdot \nu _2(w, w'), \label{aju3} \\
&&v\cdot v'=\sigma _1(v, v')\cdot \sigma _2(v, v'), \label{aju4}
\end{eqnarray}
for all $v, v'\in V$, $d\in D$, $w, w'\in W$. We prove now that $Q$ satisfies the other 
conditions in Theorem \ref{mainitercros}, namely conditions (\ref{concr1})--(\ref{concr3}).\\
\underline{Proof of (\ref{concr1})}:\\[2mm]
${\;\;\;\;\;}$$Q_W(v, w_P)\ot Q_D(v, w_P)d_{P_R}\ot 
Q_V(v, w_P)_R$
\begin{eqnarray*}
&=&Q_W(v, w_P)\cdot Q_D(v, w_P)\cdot d_{P_R}\cdot 
Q_V(v, w_P)_R\\
&\overset{(\ref{aju2})}{=}&Q_W(v, w_P)\cdot Q_D(v, w_P)\cdot 
Q_V(v, w_P)\cdot d_P\\
&=&Q(v\ot w_P)\cdot d_P\\
&=&v\cdot w_P\cdot d_P\\
&\overset{(\ref{aju1})}{=}&v\cdot d\cdot w, 
\end{eqnarray*}
${\;\;\;\;\;}$$Q_W(v_R, w)_P\ot d_{R_P}Q_D(v_R, w)\ot 
Q_V(v_R, w)$
\begin{eqnarray*}
&=&Q_W(v_R, w)_P\cdot d_{R_P}\cdot Q_D(v_R, w)\cdot 
Q_V(v_R, w)\\
&\overset{(\ref{aju1})}{=}&d_R\cdot Q_W(v_R, w)\cdot 
Q_D(v_R, w)\cdot Q_V(v_R, w)\\
&=&d_R\cdot Q(v_R\ot w)\\
&=&d_R\cdot v_R\cdot w\\
&\overset{(\ref{aju2})}{=}&v\cdot d\cdot w, \;\;\;q.e.d.
\end{eqnarray*}
\underline{Proof of (\ref{concr2})}:\\[2mm]
${\;\;\;\;\;}$$Q_W(v, \nu _1(w, w'))\ot Q_D(v, \nu _1(w, w'))
\nu _2(w, w')_R\ot Q_V(v, \nu _1(w, w'))_R$
\begin{eqnarray*}
&=&Q_W(v, \nu _1(w, w'))\cdot Q_D(v, \nu _1(w, w'))\cdot 
\nu _2(w, w')_R\cdot Q_V(v, \nu _1(w, w'))_R\\
&\overset{(\ref{aju2})}{=}&
Q_W(v, \nu _1(w, w'))\cdot Q_D(v, \nu _1(w, w'))\cdot 
Q_V(v, \nu _1(w, w'))\cdot \nu _2(w, w')\\
&=&Q(v\ot \nu _1(w, w'))\cdot \nu _2(w, w')\\
&=&v\cdot \nu _1(w, w')\cdot \nu _2(w, w')\\
&\overset{(\ref{aju3})}{=}&v\cdot w\cdot w', 
\end{eqnarray*}
\begin{eqnarray*}
&&\nu _1(Q_W(v, w), Q_W(Q_V(v, w), w')_P)\ot 
\nu _2(Q_W(v, w), Q_W(Q_V(v, w), w')_P)Q_D(v, w)_P\nonumber \\
&&\;\;\;\;\;\;\;\;\;\;
Q_D(Q_V(v, w), w')
\ot Q_V(Q_V(v, w), w')\\
&=&\nu _1(Q_W(v, w), Q_W(Q_V(v, w), w')_P)\cdot 
\nu _2(Q_W(v, w), Q_W(Q_V(v, w), w')_P)\nonumber \\
&&\;\;\;\;\;\;\;\;\;\;
\cdot Q_D(v, w)_P
\cdot Q_D(Q_V(v, w), w')
\cdot Q_V(Q_V(v, w), w')\\
&\overset{(\ref{aju3})}{=}&
Q_W(v, w)\cdot Q_W(Q_V(v, w), w')_P
\cdot Q_D(v, w)_P
\cdot Q_D(Q_V(v, w), w')
\cdot Q_V(Q_V(v, w), w')\\
&\overset{(\ref{aju1})}{=}&
Q_W(v, w)\cdot Q_D(v, w)\cdot Q_W(Q_V(v, w), w')
\cdot Q_D(Q_V(v, w), w')
\cdot Q_V(Q_V(v, w), w')\\
&=&Q_W(v, w)\cdot Q_D(v, w)\cdot Q(Q_V(v, w)\ot  w')\\
&=&Q_W(v, w)\cdot Q_D(v, w)\cdot Q_V(v, w)\cdot  w'\\
&=&Q(v\ot w)\cdot w'\\
&=&v\cdot w\cdot w', \;\;\;q.e.d.
\end{eqnarray*}
\underline{Proof of (\ref{concr3})}:\\[2mm]
${\;\;\;\;\;}$
$Q_W(\sigma _2(v, v'), w)_P\ot \sigma _1(v, v')_P
Q_D(\sigma _2(v, v'), w)\ot 
Q_V(\sigma _2(v, v'), w)$
\begin{eqnarray*}
&=&Q_W(\sigma _2(v, v'), w)_P\cdot \sigma _1(v, v')_P\cdot 
Q_D(\sigma _2(v, v'), w)\cdot 
Q_V(\sigma _2(v, v'), w)\\
&\overset{(\ref{aju1})}{=}&
\sigma _1(v, v')\cdot 
Q_W(\sigma _2(v, v'), w)\cdot 
Q_D(\sigma _2(v, v'), w)\cdot 
Q_V(\sigma _2(v, v'), w)\\
&=&\sigma _1(v, v')\cdot 
Q(\sigma _2(v, v')\ot w)\\
&=&\sigma _1(v, v')\cdot 
\sigma _2(v, v')\cdot w\\
&\overset{(\ref{aju4})}{=}&v\cdot v'\cdot w, 
\end{eqnarray*}
\begin{eqnarray*}
&&Q_W(v, Q_W(v', w))\ot Q_D(v, Q_W(v', w))
Q_D(v', w)_R\nonumber \\
&&\;\;\;\;\;\;\;\;\;\;
\sigma _1(Q_V(v, Q_W(v', w))_R, Q_V(v', w))\ot 
\sigma _2(Q_V(v, Q_W(v', w))_R, Q_V(v', w))\\
&=&Q_W(v, Q_W(v', w))\cdot Q_D(v, Q_W(v', w))
\cdot Q_D(v', w)_R\nonumber \\
&&\;\;\;\;\;\;\;\;\;\;
\cdot \sigma _1(Q_V(v, Q_W(v', w))_R, Q_V(v', w))\cdot 
\sigma _2(Q_V(v, Q_W(v', w))_R, Q_V(v', w))\\
&\overset{(\ref{aju4})}{=}&
Q_W(v, Q_W(v', w))\cdot Q_D(v, Q_W(v', w))
\cdot Q_D(v', w)_R\nonumber \\
&&\;\;\;\;\;\;\;\;\;\;
\cdot Q_V(v, Q_W(v', w))_R\cdot Q_V(v', w)\\
&\overset{(\ref{aju2})}{=}&
Q_W(v, Q_W(v', w))\cdot Q_D(v, Q_W(v', w))
\cdot Q_V(v, Q_W(v', w))\cdot Q_D(v', w)\cdot Q_V(v', w)\\
&=&Q(v\ot Q_W(v', w))\cdot Q_D(v', w)\cdot Q_V(v', w)\\
&=&v\cdot Q_W(v', w)\cdot Q_D(v', w)\cdot Q_V(v', w)\\
&=&v\cdot Q(v'\ot w)\\
&=&v\cdot v'\cdot w, \;\;\;q.e.d.
\end{eqnarray*}

Hence, the hypotheses of Theorem \ref{mainitercros} are satisfied, so we can consider 
the iterated crossed product afforded by the map $Q$. The only thing left to prove 
is that the original multiplication of $W\ot D\ot V$ coincides with the multiplication of the 
iterated crossed product (as it appears at the end of the proof of Theorem 
\ref{mainitercros}). So, we express the original multiplication of $W\ot D\ot V$ as 
follows: \\
${\;\;\;\;\;}$
$(w\ot d\ot v)\cdot (w'\ot d'\ot v')$
\begin{eqnarray*}
&=&
w\cdot d\cdot v\cdot w'\cdot d'\cdot v'\\
&=&w\cdot d\cdot Q(v\ot w')\cdot d'\cdot v'\\
&=& w\cdot d\cdot Q_W(v,w')\cdot Q_D(v, w')\cdot Q_V(v, w')\cdot d'\cdot v'\\
&\overset{(\ref{aju1}), (\ref{aju2})}{=}&
w\cdot Q_W(v,w')_P\cdot d_P\cdot Q_D(v, w')\cdot d'_R
\cdot Q_V(v, w')_R\cdot v'\\
&\overset{(\ref{aju3}), (\ref{aju4})}{=}&
\nu _1(w, Q_W(v,w')_P)\cdot \nu _2(w, Q_W(v,w')_P)
d_PQ_D(v, w')d'_R
\sigma _1(Q_V(v, w')_R, v')\\
&&\cdot 
\sigma _2(Q_V(v, w')_R, v')\\
&=&\nu _1(w, Q_W(v, w')_P)\ot 
\nu _2(w, Q_W(v, w')_P)
d_PQ_D(v, w')d'_R
\sigma _1(Q_V(v, w')_R, v')\\
&&\ot 
\sigma _2(Q_V(v, w')_R, v'),
\end{eqnarray*}
and this is exactly the multiplication of the iterated crossed product.
\end{proof}
\begin{example}{\em 
We recall from \cite{jlpvo} what was called there an iterated twisted tensor 
product of algebras. Let $A$, $B$, $C$ be associative unital algebras, 
$R_1:B\ot A\rightarrow A\ot B$, $R_2:C\ot B\rightarrow B\ot C$, 
$R_3:C\ot A\rightarrow A\ot C$ twisting maps  satisfying the braid (or hexagon) 
equation 
\begin{eqnarray*}
&&(id_A\ot R_2)\circ (R_3\ot id_B)\circ (id_C\ot R_1)=
(R_1\ot id_C)\circ (id_B\ot R_3)\circ (R_2\ot id_A). 
\end{eqnarray*}
Then we have an algebra structure on $A\ot B\ot C$ (called the iterated twisted 
tensor product) with unit $1_A\ot 1_B\ot 1_C$ and multiplication 
\begin{eqnarray*}
&&(a\ot b\ot c)(a'\ot b'\ot c')=a(a'_{R_3})_{R_1}\ot b_{R_1}b'_{R_2}\ot 
(c_{R_3})_{R_2}c'.
\end{eqnarray*}
If we consider the crossed products $A\oot _{P, \nu }B=A\ot _{R_1}B$ 
and $B\ot _{R, \sigma }C=B\ot _{R_2}C$, where $P=R_1$, $R=R_2$, 
$\nu (a\ot a')=aa'\ot 1_B$ and $\sigma (c\ot c')=1_B\ot cc'$, then one can easily 
see, by using Theorem \ref{converse}, that the iterated twisted tensor product 
coincides with the iterated crossed product afforded by the map 
$Q:C\ot A\rightarrow A\ot B\ot C$, $Q(c\ot a)=a_{R_3}\ot 1_B\ot c_{R_3}$.}
\end{example}
\begin{example}{\em 
We recall the following construction from \cite{sommer}. Let $H$ be a bialgebra, 
$A$ (respectively $B$) a bialgebra in the category of left (respectively right) 
Yetter-Drinfeld $H$-modules, with module and comodule structures denoted by 
$H\ot A\rightarrow A$, $h\ot a\mapsto h\rightarrow a$, 
$A\rightarrow H\ot A$, $a\mapsto a^1\ot a^2$, 
$B\ot H\rightarrow B$, $b\ot h\mapsto b\leftarrow h$, 
$B\rightarrow B\ot H$, $b\mapsto b^1\ot b^2$,  
and comultiplications denoted by $\Delta (a)=a_1\ot a_2$, $\Delta (b)=b_1\ot b_2$. 
Assume that we are given linear maps $\rightharpoonup :B\ot A\rightarrow A$, 
$\leftharpoonup :B\ot A\rightarrow B$ and $\sharp :B\ot A\rightarrow H$, 
such that $A$ is a left $B$-module via $\rightharpoonup $ and $B$ is a right 
$A$-module via $\leftharpoonup $ and some more conditions (listed in 
\cite{sommer}, pages 39--40) are satisfied. Then by Theorem 3.3 in \cite{sommer}, 
$A\ot H\ot B$ becomes a bialgebra with the comultiplication of a two-sided 
cosmash product, unit $1_A\ot 1_H\ot 1_B$ and the following multiplication: 
\begin{eqnarray*}
&&(a\ot h\ot b)(a'\ot h'\ot b')=a(h_1\rightarrow (b_1^1\rightharpoonup a'_1))
\ot h_2b_1^2(b_2\sharp a'_2)(a'_3)^1h'_1\ot ((b_3\leftharpoonup (a'_3)^2)
\leftarrow h'_2)b'
\end{eqnarray*}
This algebra structure is actually an iterated crossed product. Indeed, consider the 
crossed products $H\ot _{R, \sigma }B=H\# B$ and $A\oot _{P, \nu }H=A\# H$, 
where $H\# B$ and $A\# H$ are respectively the usual right and left 
smash products (so we have $R(b\ot h)=h_1\ot b\leftarrow h_2$, 
$\sigma (b\ot b')=1_H\ot bb'$, $P(h\ot a)=h_1\rightarrow a\ot h_2$, 
$\nu (a\ot a')=aa'\ot 1_H$). By using Theorem \ref{converse}, it turns out that 
Sommerh\"{a}user's algebra structure is the iterated crossed product 
afforded by the map 
\begin{eqnarray*}
&&Q:B\ot A\rightarrow A\ot H\ot B, \;\;\;
Q(b\ot a)=b_1^1\rightharpoonup a_1\ot b_1^2(b_2\sharp a_2)a_3^1\ot 
b_3\leftharpoonup a_3^2.
\end{eqnarray*}
}
\end{example}
\begin{example}{\em 
We recall several things about quasi-Hopf smash products (we use 
terminology and notation as in \cite{bpv}, \cite{bpvo}). Let $H$ be a 
quasi-bialgebra  and $A$ (respectively $B$) 
a left (respectively right) $H$-module algebra. We can consider the left 
(respectively right) smash product $A\# H$ (respectively $H\# B$), 
which is an associative algebra having $A\ot H$ (respectively 
$H\ot B$) as underlying vector space, multiplication 
$(a\# h)(a'\# h')=(x^1\cdot a)(x^2h_1\cdot a')\# x^3h_2h'$ 
(respectively $(h\# b)(h'\# b')=hh'_1x^1\# (b\cdot h'_2x^2)(b'\cdot x^3)$) 
and unit $1_A\# 1_H$ (respectively $1_H\# 1_B$) (we denoted as 
usual $\Phi =X^1\ot X^2\ot X^3$ and $\Phi ^{-1}=x^1\ot x^2\ot x^3$ 
the associator of $H$ and its inverse).  We can consider also the 
so-called two-sided smash product $A\# H\# B$, which is an associative 
algebra structure on $A\ot H\ot B$ with unit $1_A\ot 1_H\ot 1_B$ and 
multiplication 
\begin{eqnarray*}
(a\# h\# b)(a'\# h'\# b')=
(x^1\cdot a)(x^2h_1y^1\cdot a')\# x^3h_2y^2h'_1z^1\# 
(b\cdot y^3h'_2z^2)(b'\cdot z^3), 
\end{eqnarray*}
 where 
$\Phi ^{-1}=y^1\ot y^2\ot y^3=z^1\ot z^2\ot z^3$ are two more copies of 
$\Phi ^{-1}$. 

One can see that $A\# H$ and $H\# B$ are crossed products, namely 
$A\# H=A\oot _{P, \nu }H$ and $H\# B=H\ot _{R, \sigma }B$, where
\begin{eqnarray*}
&&P:H\ot A\rightarrow A\ot H, \;\;\;P(h\ot a)=h_1\cdot a\ot h_2, \\
&&\nu :A\ot A\rightarrow A\ot H, \;\;\;\nu (a\ot a')=
(x^1\cdot a)(x^2\cdot a')\ot x^3,\\
&&R:B\ot H\rightarrow H\ot B, \;\;\;R(b\ot h)=h_1\ot b\cdot h_2, \\
&&\sigma :B\ot B\rightarrow H\ot B, \;\;\;\sigma (b\ot b')=
x^1\ot (b\cdot x^2)(b'\cdot x^3). 
\end{eqnarray*}
An easy computation shows that for the two-sided smash product 
$A\# H\# B$ we have $(a\# h\# b)=(a\# 1_H\# 1_B)
(1_A\# h\# 1_B)(1_A\# 1_H\# b)$, for all $a\in A$, $h\in H$ and $b\in B$, and 
that $A\# H\# B$ is the iterated crossed product obtained from 
$A\# H$ and $H\# B$ via the map $Q:B\ot A\rightarrow A\ot H\ot B$, 
$Q(b\ot a)=x^1\cdot a\ot x^2\ot b\cdot x^3$.
}
\end{example}

%%%%%%%%%%%%%%%%%%%%%%%%

\end{document}